\documentclass[11pt]{amsproc} 

\setlength{\textwidth}{15cm} \setlength{\textheight}{22cm}
\setlength{\oddsidemargin}{0.5cm} \setlength{\topmargin}{0cm}
\setlength{\evensidemargin}{0.5cm} \setlength{\topmargin}{0cm}
  
\usepackage{amsmath, amssymb, amsthm, setspace, mathtools, graphicx, enumitem, nicefrac}
\usepackage{thmtools}
\usepackage[usenames]{color}
\usepackage{tikz-cd}
\usepackage{microtype}

\usepackage[pagebackref,colorlinks,linkcolor=blue,citecolor=blue,urlcolor=blue,hypertexnames=true]{hyperref}
\usepackage{amsrefs} 

\tracingpages1  
\tracingoutput1 

\declaretheorem[style=plain,name=Theorem]{theorem}
\declaretheorem[style=plain,numberlike=theorem,name=Lemma]{lemma}

\declaretheorem[style=remark,numberlike=theorem,name=Remark]{remark}

\declaretheorem[style=remark,numberlike=theorem,name=Question]{question}


\newcommand{\N}{\mathbb{N}}             
\newcommand{\Z}{\mathbb{Z}}             
\newcommand{\R}{\mathbb{R}}             
\newcommand{\SL}{\mathrm{SL}}           
\newcommand{\GL}{\mathrm{GL}}           
\newcommand{\Aut}{\mathrm{Aut}}         
\newcommand{\SAut}{\mathrm{SAut}}       
\newcommand{\one}{\mathbf{1}}           
\newcommand{\I}{\mathrm{I}}             
\newcommand{\inv}[1]{\bar{#1}}          
\newcommand{\E}[1]{{E_{\smash{#1}}}}    

\DeclareMathOperator{\Ima}{Im}
\DeclareMathOperator{\id}{id}
\DeclareMathOperator{\supp}{supp}
\DeclareMathOperator{\colim}{colim}

\author{Martin Nitsche}
\address{Martin Nitsche, Karlsruhe Institute of Technology, Germany}
\email{martin.nitsche@kit.edu}

\title{A human property~(T) proof for high-rank $\Aut(F_n)$}

\allowdisplaybreaks
\onehalfspace

\begin{document}

\begin{abstract}
Existing property~(T) proofs for $\Aut(F_n)$, $n\geq 4$, rely crucially on extensive computer calculations. We give a new proof that $\Aut(F_n)$ has property~(T) for all but finitely many $n$ that is inspired by the semidefinite programming approach but does not use the computer in any step. More specifically, we prove property~(T) for a certain extension $\Gamma_n$ of $\SAut(F_n)$ as $n\to\infty$.
\end{abstract}

\maketitle

\section{Introduction}

Let $\Gamma$ be a discrete group with finite generating set $\mathcal{S}=\mathcal{S}^{-1}$.
Then $\Gamma$ has Kazhdan's property~(T) if any unitary representation with almost invariant vectors contains a (nontrivial) invariant vector, or, equivalently, if the reduced first cohomology $\bar{\mathrm{H}}^1(\Gamma,\pi)$ vanishes for every unitary representation $\pi$.
Property~(T) is a much studied rigidity property with many interesting consequences, but proving it for a given group is typically difficult.

In 2016, Ozawa \cite{O} proposed a new approach to prove property~(T) with the computer, and Netzer and Thom \cite{NT} showed how to implement the calculation in the framework of a semidefinite program (SDP).
The semidefinite programming approach proved to be successful in practice, in particular for the automorphism groups of the free groups, $\Aut(F_n)$ with $n\geq 4$ \cites{KNO,KKN,N}, that had long been suspected to have property~(T). These groups can be thought of as a generalization of the general linear groups $\GL(n,\Z)=\Aut(\Z^n)$, which classically have property~(T) for $n\geq 3$. The practical significance is that $\Aut(F_n)$ having property~(T) explains the fast mixing properties of the product replacement algorithm used in computational group theory \cite{LP}.

The property~(T) computer proof consists of a numerical computer calculation that produces a decomposition of $\Delta^2-\varepsilon\Delta$ as a sum $\sum_i {\xi_i}^*\xi_i$, $\xi_i\in\R\Gamma$, where $\varepsilon>0$ and $\Delta\vcentcolon=\sum_{S\in \mathcal{S}}\one-S\in\R\Gamma$ is the group Laplacian.
This process is quite intransparent. The decomposition provided by the computer is usually huge and seemingly without a pattern. But even if it were simple enough to check it manually -- say, two summands supported on ten group elements -- it is unclear if one could learn more from it than the fact that it works.

It is therefore our goal to find a property~(T) proof for $\Aut(F_n)$ that does not use the computer. This task appears to be very hard for a fixed small $n$, but if one lets $n\to\infty$, one obtains more freedom, which simplifies the task. We note that Ozawa \cite{O2} has similarly studied a property~(T)--like property for certain matrix groups $\mathrm{EL}_n(\mathcal{R})\subset\SL(n,\mathcal{R})$ over nonunital rings as $n\to\infty$. But his tools, in particular the representation theory of the integral Heisenberg group, do not transfer to $\Aut(F_n)$.

The benefit of our human proof is that one can see which properties of $\Aut(F_n)$ are used in which steps. And one can try to generalize all or part of the proof to other groups that are structurally similar to $\Aut(F_n)$. In fact, our proof does not use all the relations satisfied by $\Aut(F_n)$, whence we actually prove property~(T) for a certain group extension $\Gamma_n$ of a finite-index subgroup of $\Aut(F_n)$, defined in section~\ref{sec:setting}.

\begin{theorem}\label{thm:main-result}
All but finitely many of the groups $\Gamma_n$ have property~(T). Consequently, the same is true for the groups $\Aut(F_n)$.
\end{theorem}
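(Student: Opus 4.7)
My starting point would be the Ozawa/Netzer--Thom reformulation already emphasised in the introduction: to prove property~(T) for $\Gamma_n$ it suffices to exhibit some $\varepsilon_n>0$ and elements $\xi_{n,i}\in\R\Gamma_n$ with
\[
\Delta_n^{\,2} - \varepsilon_n\,\Delta_n \;=\; \sum_i \xi_{n,i}^{\,*}\,\xi_{n,i}\quad\text{in }\R\Gamma_n.
\]
Rather than trying to pin down such a decomposition for a single small~$n$ and then bootstrap, I would construct a whole \emph{family} of decompositions, one per~$n$, and let the $n\to\infty$ asymptotics supply the positivity margin that the SDP solver would otherwise provide numerically.

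\emph{Step 1 (Symmetry reduction).} The generating set of $\Gamma_n$ is equivariant under the symmetric group $S_n$ permuting the indices $\{1,\dots,n\}$. Averaging any candidate decomposition over $S_n$ preserves positivity and reduces the $\xi_{n,i}$ to a small number of orbit types, which is what should make a closed-form ansatz possible. \emph{Step 2 (Building blocks from small subgroups).} Next I would break $\Delta_n$ into $S_n$-orbit summands and look for ``local'' SOS identities attached to the rank-two and rank-three Nielsen subgroups generated by the transvections $E_{ij}$ on a fixed small set of indices. These are the $\Gamma_n$-analogues of $\SL_2$ and $\SL_3$ inside $\GL$; their (braid-, commutator-, and Steinberg-type) relations are the only genuinely nontrivial algebraic input, and the fact that $\Gamma_n$ is built from fewer relations than $\Aut(F_n)$ is what makes such local certificates concretely writeable.

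\emph{Step 3 (Accounting and passage to $\Aut(F_n)$).} The diagonal contributions accumulated over the $\binom{n}{2}$ or $\binom{n}{3}$ possible index sets grow combinatorially faster than the off-diagonal cross terms between different orbit classes, so a positive spectral gap $\varepsilon_n$ should survive for all $n$ beyond some threshold. Property~(T) for $\Gamma_n$ then transfers to the finite-index subgroup of $\Aut(F_n)$ it surjects onto, and hence to $\Aut(F_n)$ itself, by the standard permanence of~(T) under quotients and finite-index supergroups.

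\textbf{Main obstacle.} The real work sits in Steps~2--3. One must (i)~extract explicit SOS certificates from the small sub-presentations with quantitative, uniform constants; (ii)~organise the cross terms between different orbits of generators so that their signs and magnitudes can be controlled by hand; and (iii)~verify that the $\binom{n}{k}$-fold multiplicity of diagonal terms genuinely dominates these cross terms for large $n$. It is (iii) that forces the result into the $n\to\infty$ regime: for fixed small $n$ the combinatorial surplus is simply too small to absorb the interactions without the fine numerical tuning that an SDP solver performs, and designing building blocks whose asymptotic interaction is analytically tractable is what makes the computer dispensable in the first place.
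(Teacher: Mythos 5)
Your plan is the \emph{primal} SOS strategy (exhibit $\Delta_n^2-\varepsilon_n\Delta_n=\sum_i\xi_{n,i}^*\xi_{n,i}$ directly), whereas the paper argues entirely on the \emph{dual} side: it assumes that infinitely many $\Gamma_n$ fail (T), extracts normalized $\Lambda_n$-invariant harmonic cocycles via Shalom's theorem, passes to a weak limit form on the augmentation ideal of $\colim\Gamma_n$, and then derives successively stronger constraints on $\Phi(\gamma)=\|\one-\gamma\|^2$ (exact additivity over words with disjoint support, a cancellation and reordering calculus for star-shaped words, and finally $\Phi(\omega)=|\omega|$ for all star-shaped $\omega$) until a five-point configuration $\one,\E{ac},\E{bc},\E{ac}\E{bc},\E{ab}$ yields a metric contradiction. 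So the two approaches are genuinely different, and only the dual one is carried out in the paper.

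As written, your proposal has a real gap exactly where you locate the ``main obstacle.'' Step 2 asks for closed-form local SOS certificates attached to rank-two and rank-three Nielsen subgroups; but the rank-two subgroups surject onto $\SL(2,\Z)$-like quotients and do not have property~(T), and for the rank-three and higher local pieces no human-writable certificate is known --- this is precisely the base case that Kaluba--Kielak--Nowak and Kaluba--Nowak--Ozawa had to establish by computer. Your proposal therefore reduces the theorem to the very step that the paper is designed to avoid, and gives no mechanism for producing those certificates by hand. Step 3 also has the combinatorics backwards: in the $S_n$-averaged expansion of $\Delta_n^2$ the terms indexed by \emph{disjoint} pairs of index sets occur with multiplicity $\sim n^4$, the adjacent cross terms with $\sim n^3$, and the diagonal terms only with $\sim n^2$, so the diagonal contribution cannot absorb the cross terms; one must instead control the disjoint and adjacent cross terms themselves. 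The paper does exactly this in Lemma~\ref{lem:limit-form} (the $n^4$ term is shown to be nonnegative and hence forced to vanish in the limit, making the $n^3$ adjacent term nonpositive), and its Remark after that lemma shows that the disjoint-support information alone is provably insufficient --- there exist nontrivial forms satisfying $\langle\Delta_{ab},\Delta_{cd}\rangle=0$ coming from representations factoring through $\SL(n,\Z/2\Z)$ --- so any correct argument must exploit the adjacent term as well, which your accounting does not do.
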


Our proof was inspired by the geometric dual \cite{N} of the property~(T) SDP, which we use to disprove the existence of harmonic cocycles for $\Gamma_n$, $n\gg 3$. The strategy is to find more and more constraints that a harmonic cocycle must satisfy, until we arrive at a contradiction. To obtain these constraints, we employ a range of arguments where both the number of group elements in the support of a group algebra element $\xi\in\R\Gamma$ and the word metric of the group elements can go to infinity.
While these arguments were found with a human proof, the resulting constraints can also be added into the SDP of the computer proof in order to make it much simpler. A second benefit of our proof is therefore that we showcase a range of strategies to simplify the asymptotic property~(T) proof for any sequence of groups with a similar symmetry as $\Aut(F_n)$.

Finally, we remark that, in theory, our proof is very much constructive. The equality constraints that we obtain for $n\to\infty$ are the limit of inequalities that can all be quantified. By doing this with sufficient care, one could find an explicit $N\in\N$ such that $\Gamma_n$ is shown to have property~(T) for all $n\geq N$, and one would even find an (inefficient) lower bound for the Kazhdan constants. We have not attempted this, as it would defeat the purpose of making the proof as transparent as possible.
Furthermore, all steps in our proof could be dualized, as explained in \cite{N}, and stated in the language of order units for the augmentation ideal of $\R\Gamma$, as in \cite{O2}.

\section{Preliminaries on harmonic cocycles}
We quickly recapitulate how proving property~(T) can be reformulated as disproving the existence of harmonic cocycles. More details can be found in \cite{N} and \cite{BHV}*{Sections 2.2, 2.10 and 3.2}.

When $\Gamma$ is a discrete group and $\pi\colon\Gamma\to\mathcal{H}$ an orthogonal representation on a real Hilbert space, then a 1-cocycle for $(\Gamma,\pi)$ is a map $c\colon\Gamma\to\mathcal{H}$ that satisfies the relation $c(\gamma_1\gamma_2)=c(\gamma_1)+\pi(\gamma_1)c(\gamma_2)$. This also implies $c(\one)=0$.
Such a cocycle gives rise to a positive semidefinite form $\langle\cdot,\cdot\rangle_c$ on $\I\Gamma\vcentcolon=\big\{\sum_i\lambda_i\gamma_i\,\big\vert\,\gamma_i\in\Gamma,\lambda_i\in\R,\sum_i\lambda_i=0\big\}$, the augmentation ideal of $\R\Gamma$, by $\langle \sum_i\lambda_i\gamma_i,\sum_k\mu_k\gamma_k\rangle_c\vcentcolon=\langle\sum_i\lambda_i c(\gamma_i),\sum_k\mu_k c(\gamma_k)\rangle_\mathcal{H}$. This form is nontrivial if $c$ is nontrivial (i.e., not identically zero), and by the cocycle condition it is invariant under left multiplication with elements of $\Gamma$. Up to orthogonal equivalence of representations and enlarging of the Hilbert space, the representation $\pi$ and the cocycle $c$ can be reconstructed from $\langle\cdot,\cdot\rangle_c$, but we will not need this. The information of $\langle\cdot,\cdot\rangle_c$ can also be encoded in the function $\Phi\colon\Gamma\to\R_{\geq 0},\Phi(\gamma)\vcentcolon=\|\one-\gamma\|^2$. A function that arises like this from an invariant positive semidefinite form on $\I\Gamma$ is called \emph{conditionally of positive type}. Note that always $\Phi(\one)=0$ and $\Phi(\gamma^{-1})=\Phi(\gamma)$.

When $\mathcal{S}$ is a generating set for $\Gamma$, every 1-cocycle is determined by its values on $\mathcal{S}$, whence the space $Z^1$ of all 1-cocycles can be viewed as a closed subspace of the Hilbert space~$\mathcal{H}^\mathcal{S}$.
Elements in the image of the coboundary map $\delta^0\vcentcolon=\bigoplus_{S\in \mathcal{S}}\id-\pi(S)\colon\mathcal{H}\to\mathcal{H}^{\mathcal{S}}$ are trivially cocycles. The quotient $Z^1/\overline{\Ima\delta^0}$ is denoted $\bar{\mathrm{H}}^1(\Gamma,\pi)$ and can be identified with the space of harmonic cocycles, $\bar{\mathrm{H}}^1(\Gamma,\pi)\cong Z^1\cap(\Ima\delta^0)^\perp=Z^1\cap\ker(\delta^0)^*$.

Assume now that $\Gamma$ has a finite and symmetric generating set $\mathcal{S}=\mathcal{S}^{-1}$.
Shalom's theorem states that if $\Gamma$ does \emph{not} have property~(T), then it must allow an orthogonal representation $\pi$ with a nontrivial harmonic cocycle $c$. From this one then obtains a nontrivial left-invariant positive semidefinite form $\langle\cdot,\cdot\rangle$ on $\I\Gamma$ that satisfies
\begin{align}\label{eqn:harmonic}
\begin{split}
0=\|(\delta^0)^*(c)\|^2_{\mathcal{H}^\mathcal{S}}
&=\big\|{\textstyle\sum}_{S\in \mathcal{S}}\,c(S)-\pi(S)^*c(S)\big\|^2_{\mathcal{H}^\mathcal{S}}\\
&=\big\|{\textstyle\sum}_{S\in \mathcal{S}}\,c(S)+c(S^{-1})-\big(c(S^{-1})+\pi(S^{-1})c(S)\big)\big\|^2_{\mathcal{H}^\mathcal{S}}\\
&=\big\|{\textstyle\sum}_{S\in \mathcal{S}}\,c(S)+c(S^{-1})-c(S^{-1}S)\big\|^2_{\mathcal{H}^\mathcal{S}}\\
&=\big\|2\cdot{\textstyle\sum}_{S\in \mathcal{S}}\,c(S)-c(\one)\big\|^2_{\mathcal{H}^\mathcal{S}}\\
&=4\cdot\big\|{\textstyle\sum}_{S\in \mathcal{S}}\,\one- S\big\|^2.
\end{split}
\end{align}
This is the statement that we want to lead to a contradiction.

\section{The setting for \texorpdfstring{$\SAut(F_n)$, $n\to\infty$}{SAut(Fn), n->inf}}\label{sec:setting}
Let $F_n=\langle \mathcal{Z}_n\rangle$ be the free group with generating set $\mathcal{Z}_n=\{z_1,\dots,z_n\}$. We use the notation $\inv{z}$ to denote the inverse of an element $z\in\mathcal{Z}_n\cup\mathcal{Z}_n^{-1}$ and we use $\tilde{z}\vcentcolon=\{z,\inv{z}\}$ if we do not want to distinguish between a generator and its inverse.
Let $\Aut(F_n)$ be the automorphism group of $F_n$, $q\colon\Aut(F_n)\to\GL(n,\Z)$ the group homomorphism induced by the abelianization map $F_n\to\Z^n$ and $\SAut(F_n)=q^{-1}(\SL(n,\Z))$. Since $\SAut(F_n)$ is a finite index subgroup of $\Aut(F_n)$, it has property~(T) exactly if $\Aut(F_n)$ has (T).

Let $\mathcal{S}_n\vcentcolon=\big\{\E{ab}\mid a,b\in \mathcal{Z}_n\cup\mathcal{Z}_n^{-1}, \tilde{a}\neq\tilde{b}\big\}$, where $\E{ab}\colon F_n\to F_n$ denotes the automorphism that sends $a$ to $ab$ and that is the identity on $\mathcal{Z}_n\cup\mathcal{Z}_n^{-1}\setminus\{a,\inv{a}\}$.
The set $\mathcal{S}$ is symmetric and generates $\SAut(F_n)$.
Gersten \cite{G} showed that the following relations -- for all $a,b,c,d\in \mathcal{Z}_n\cup\mathcal{Z}_n^{-1}$ with $\tilde{a},\tilde{b},\tilde{c},\tilde{d}$ distinct -- give a presentation for $\SAut(F_n)$, $n\geq 3$.
\begin{align}\stepcounter{equation}
\E{ab}\E{a\inv{b}}&=\one\label{eqn:inverse}\tag{\arabic{equation}a}\\
[\E{ab},\E{cd}]&=\one\label{eqn:disjoint-commutes}\tag{\arabic{equation}b}\\
[\E{ab},\E{\inv{a}c}]=[\E{ab},\E{c\inv{b}}]&=\one\label{eqn:same-type-commutes}\tag{\arabic{equation}c}\\
[\E{ab},\E{bc}]&=\E{ac}\label{eqn:pentagram-relation}\tag{\arabic{equation}d}\\
[\E{ab},\E{cb}]&=\one\label{eqn:right-commutes}\tag{\arabic{equation}e}\\
[\E{ab},\E{\inv{a}b}]&=\one\notag\\
\E{ba}\E{\inv{a}b}\E{\inv{b}\inv{a}}\E{b\inv{a}}\E{ab}\E{\inv{b}a}&=\one\notag\\
(\E{ba}\E{\inv{a}b}\E{\inv{b}\inv{a}})^4&=\one\notag
\end{align}
Here, $[X,Y]\vcentcolon=XYX^{-1}Y^{-1}$.
The last three relations are not needed to prove property~(T).
So let $\Gamma_n$ be the group with generators $\mathcal{S}_n$ and relations~\ref{eqn:inverse} to~\ref{eqn:right-commutes}.

We will think of each generator $\E{ab}$ as a connection between the \emph{places} $\tilde{a},\tilde{b}$ and we will say that the \emph{support} of a word over $\mathcal{S}_n$ consists of all the places that appear in its letters. It follows from equation~\ref{eqn:disjoint-commutes} that words with disjoint support commute in $\Gamma_n$.

When $\sigma\colon \mathcal{Z}\to \mathcal{Z}$ is a permutation and $s\colon\mathcal{Z}\to\{\pm 1\}$ a choice of signs, the group homomorphism $\Psi_{\sigma s}\colon\Gamma_n\to\Gamma_n$ defined by $\E{ab}\mapsto \E{\sigma(a)^{s(a)}\sigma(b)^{s(b)}}$ is an automorphism of $\Gamma_n$. The group $\Lambda_n<\Aut(\Gamma_n)$ of all such automorphisms is finite and fixes $\mathcal{S}_n$ set-wise.
Any nontrivial left-invariant positive semidefinite linear form $\langle\cdot,\cdot\rangle_n$ on $\I\Gamma_n$ can be averaged over $\Lambda_n$ in order to make it invariant under $\Lambda_n$. We will always assume that this has been done and that the form has been normalized by scaling such that $\|S-\one\|_n=\|\E{z_1z_2}-\one\|_n=1$ for all $S\in\mathcal{S}_n$.
If $\omega=S_1\dots S_k$ is a word in $\mathcal{S}$ of length $|\omega|$, then the triangle inequality provides a bound
\begin{equation}\label{eqn:quadratic-bound}
\|\omega-\one\|_n\leq\sum_i \|S_1\dots S_{i+1}-S_1\dots S_i\|_n=\sum_i\|S_{i+1}-\one\|_n=|\omega|
\end{equation}
that does not depend on the specific cocycle. We can use this bound to pass to the limit $n\to\infty$. For $a,b\in \mathcal{Z}$ let $\Delta_{ab}\vcentcolon=8\cdot\one-\E{ab}-\E{a\inv{b}}-\E{\inv{a}b}-\E{\inv{a}\inv{b}}-\E{ba}-\E{b\inv{a}}-\E{\inv{b}a}-\E{\inv{b}\inv{a}}$.

\begin{lemma}\label{lem:limit-form}
Assume that infinitely many of the groups $\Gamma_n$ do not have property~(T).
Then there exists a nontrivial left-invariant positive semidefinite form on the augmentation ideal of the group algebra of $\Gamma=\colim\Gamma_n$ that is invariant under $\Lambda=\colim \Lambda_n$ and satisfies
\begin{align}
\label{eqn:disjoint-exact}
\langle\Delta_{ab},\Delta_{cd}\rangle=\langle\one-\E{ab},\one-\E{cd}\rangle=\|\E{ab}\E{cd}-\E{ab}-\E{cd}+\one\|^2&=0,\\
\label{eqn:adjacent-big}
\langle\Delta_{ab},\Delta_{ac}\rangle&\leq 0
\end{align}
for all $\tilde{a},\tilde{b},\tilde{c},\tilde{d}$ distinct.
\end{lemma}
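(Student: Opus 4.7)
The plan has three parts: extract a limit form by compactness, derive the disjoint identity by exploiting the $n^2$-scaling in the harmonicity relation, and then confront the adjacent inequality as the main obstacle.

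\emph{Setting up the limit form.} For each $n$ where $\Gamma_n$ fails property~(T), apply Shalom's theorem to obtain a nontrivial left-$\Gamma_n$-invariant PSD form on $\I\Gamma_n$; average over the finite group $\Lambda_n$ and rescale so that $\|S-\one\|_n=1$ for each $S\in\mathcal{S}_n$. The bound~\eqref{eqn:quadratic-bound} then gives $|\langle\one-g,\one-h\rangle_n|\leq|g|\cdot|h|$ uniformly in $n$, for any fixed $g,h\in\Gamma=\colim\Gamma_n$ (once $n$ is large enough that $g,h\in\Gamma_n$). A Tychonoff/diagonal extraction provides a subsequence along which $\langle\one-g,\one-h\rangle_n$ converges for every such pair; the pointwise limit is a bilinear form on $\I\Gamma$ inheriting PSD, left-$\Gamma$-invariance, and $\Lambda$-invariance (each a pointwise condition on finitely many elements) as well as nontriviality (from the preserved normalization).

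\emph{Disjoint case.} For each $n$, the harmonicity identity~\eqref{eqn:harmonic} puts $\Sigma_n:=\sum_{S\in\mathcal{S}_n}(\one-S)=\sum_{\{p,p'\}}\Delta_{pp'}$ in the kernel of $\langle\cdot,\cdot\rangle_n$. Taking the inner product with $\Delta_{ab}$ and grouping the $\binom{n}{2}$ place-pairs into the three $\Lambda_n$-orbits determined by their intersection with $\{\tilde a,\tilde b\}$ gives
\[
A_n+2(n-2)B_n+\binom{n-2}{2}C_n=0,
\]
where $A_n=\|\Delta_{ab}\|_n^2$ and $B_n$, $C_n$ are the common adjacent and disjoint values. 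Since $A_n$ and $B_n$ stay bounded by~\eqref{eqn:quadratic-bound}, dividing by $\binom{n-2}{2}$ and letting $n\to\infty$ forces $C_\infty=\langle\Delta_{ab},\Delta_{cd}\rangle=0$. For the second equality in~\eqref{eqn:disjoint-exact}, the stabilizer of $(\{\tilde a,\tilde b\},\{\tilde c,\tilde d\})$ in $\Lambda$ (four independent sign flips plus the two transpositions $\tilde a\leftrightarrow\tilde b$ and $\tilde c\leftrightarrow\tilde d$) acts transitively on the $64$ summands of $\langle\Delta_{ab},\Delta_{cd}\rangle$, so each summand vanishes individually. The third equality then follows by expanding $\|(\one-\E{ab})(\one-\E{cd})\|^2$ via left-invariance and the commutation~\eqref{eqn:disjoint-commutes} into $4-\Phi(\E{ab}\E{cd})-\Phi(\E{a\bar{b}}\E{cd})$, where $\Phi(\gamma):=\|\one-\gamma\|^2$; both $\Phi$-values are forced to $2$ by the previous step combined with a sign flip in $\Lambda$.

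\emph{Adjacent case -- the main obstacle.} Running the analogous PSD argument on $\|\sum_{c\ne a}\Delta_{ac}\|_n^2\geq 0$ yields only $A_n+(n-2)B_n\geq 0$, i.e., $B_\infty\geq 0$: the \emph{opposite} inequality. Thus~\eqref{eqn:adjacent-big} cannot follow from PSD and harmonicity alone; one must bring in the remaining Gersten relations. The commutations~\eqref{eqn:same-type-commutes} and~\eqref{eqn:right-commutes} constrain $\Phi$-values of length-$2$ products of certain adjacent-support generators, and the pentagon relation~\eqref{eqn:pentagram-relation}, $[\E{ab},\E{bc}]=\E{ac}$, further equates the $\Phi$-value of a single generator to that of a length-$4$ commutator, thereby linking $\Phi$-values across distinct adjacency types. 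The strategy is to combine these algebraic identities with $\Lambda$-invariance to construct an explicit $\xi_n\in\I\Gamma_n$ whose norm-squared has a negative leading-order coefficient on $B_n$; dividing by that coefficient and passing to the limit then yields $B_\infty\leq 0$. Finding this $\xi_n$ is the crux of the lemma.
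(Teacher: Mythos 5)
Your extraction of the limit form and your proof of equation~\ref{eqn:disjoint-exact} are correct and run along the same lines as the paper: pairing the kernel element $\sum_{S}(\one-S)$ with a single $\Delta_{ab}$ instead of with itself is a harmless variant of the paper's counting, and both your transitivity argument for the $64$ summands and your reduction of the third equality to $\Phi(\E{a\inv{b}}\E{cd})=2$ check out.

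The gap is equation~\ref{eqn:adjacent-big}, which you explicitly leave open, and your diagnosis of it is wrong: it \emph{does} follow from positive semidefiniteness and harmonicity, with no need for the pentagram relation or a cleverly designed $\xi_n$. The ingredient you are missing is one you have essentially already computed. For $\tilde{a},\tilde{b},\tilde{c},\tilde{d}$ distinct the generators $X=\E{ab}$ and $Y=\E{cd}$ commute by relation~\ref{eqn:disjoint-commutes}, and the left-invariance computation underlying your ``third equality'' gives $\|XY-X-Y+\one\|_n^2=4\langle\one-X,\one-Y\rangle_n$ \emph{for every} $n$, not merely in the limit. Hence $\langle\one-X,\one-Y\rangle_n\geq 0$, and by $\Lambda_n$-invariance $C_n=\langle\Delta_{ab},\Delta_{cd}\rangle_n=64\,\langle\one-X,\one-Y\rangle_n\geq 0$. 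Feeding $C_n\geq 0$ and $A_n=\|\Delta_{ab}\|_n^2\geq 0$ into your own identity $A_n+2(n-2)B_n+\binom{n-2}{2}C_n=0$ immediately yields $B_n\leq 0$ for all $n\geq 3$, hence $\langle\Delta_{ab},\Delta_{ac}\rangle=\lim B_n\leq 0$. This is exactly the paper's argument (it also uses the nonnegativity of the dominating disjoint term to conclude that term tends to zero, which is an alternative to your division by $\binom{n-2}{2}$). Your observation that $\|\sum_{c\neq a}\Delta_{ac}\|_n^2\geq 0$ forces the reverse inequality $B_\infty\geq 0$ is correct but is not an obstruction: the two inequalities together just say the limit value is $0$, which the paper later records as equation~\ref{eqn:adjacent-exact}; the lemma only needs the $\leq 0$ direction at this stage.
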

\begin{proof}
Using the assumption, let $\mathcal{N}=\{n\in\N\mid\Gamma_n\text{ does not have property~(T)}\}$, and for each $n\in\mathcal{N}$ let $\langle\cdot,\cdot\rangle_n$ be the bilinear form associated to a normalized $\Lambda_n$-invariant harmonic cocycle on $(\Gamma_n,\mathcal{S}_n)$. For every choice $\xi_1,\xi_2\in\I\Gamma$ the inner product $\langle\xi_1,\xi_2\rangle_n$ is defined for all but finitely many $n\in\mathcal{N}$, and by equation~\ref{eqn:quadratic-bound} these values lie in a compact subset of $\R$. Therefore, a subsequence of $(\langle\cdot,\cdot\rangle_n)_{n\in\mathcal{N}}$ converges in weak topology to a bilinear form $\langle\cdot,\cdot\rangle$ on $\I\Gamma$. The limit form is still positive semidefinite, left-invariant, $\Lambda$\nobreakdash-invariant and normalized by $\|\one-\E{ab}\|^2=1$.

To obtain equations~\ref{eqn:disjoint-exact} and~\ref{eqn:adjacent-big}, observe that by equation~\ref{eqn:harmonic} and $\Lambda$-invariance,
\begin{align}\label{eqn:harmonicity}
\begin{split}
0\overset{\ref{eqn:harmonic}}=&\big\|2\cdot{\textstyle\sum}_{\mathcal{S}_n}\one-S\big\|_n^2
=\langle{\textstyle\sum}_{a\neq b\in\mathcal{Z}_n}\Delta_{ab},{\textstyle\sum}_{c\neq d\in\mathcal{Z}_n}\Delta_{cd}\rangle
=\sum_{a\neq b}\sum_{c\neq d}\langle\Delta_{ab},\Delta_{cd}\rangle_n\\
=&\sum_{a,b,c,d\text{ distinct}}\langle\Delta_{ab},\Delta_{cd}\rangle_n+4\cdot\sum_{a,b,c\text{ distinct}}\langle\Delta_{ab},\Delta_{ac}\rangle_n+2\cdot\sum_{a\neq b}\|\Delta_{ab}\|_n^2\\
=&\phantom{{}+4}n(n-1)(n-2)(n-3)\cdot\langle\Delta_{z_1z_2},\Delta_{z_3z_4}\rangle_n\\
&+4n(n-1)(n-2)\cdot\langle\Delta_{z_1z_2},\Delta_{z_1z_3}\rangle_n\\
&+2n(n-1)\cdot\|\Delta_{z_1z_2}\|_n^2.
\end{split}
\end{align}
As $n\to\infty$, the dominating term on the right-hand side has to go to $0$.
For $a,b\in\mathcal{Z}$ let $\mathcal{E}_{ab}\subset\mathcal{S}$ be the set of all generators that appear in $\Delta_{ab}$. If $\tilde{a},\tilde{b},\tilde{c},\tilde{d}$ are distinct, all products $\{XY\mid X\in\mathcal{E}_{ab},Y\in\mathcal{E}_{cd}\}$ are related to $\E{ab}\E{cd}$ by $\Lambda$-symmetry. Hence, the dominating term can be simplified to
\[
\langle\Delta_{z_1z_2},\Delta_{z_3z_4}\rangle_n=\sum_{X\in\mathcal{E}_{z_1z_2}}\sum_{Y\in\mathcal{E}_{z_3z_4}}\langle\one-X,\one-Y\rangle_n=64\cdot\langle\one-\E{z_1z_2},\one-\E{z_3z_4}\rangle_n,
\]
and since $X\vcentcolon=\E{z_1z_2}$ commutes with $Y\vcentcolon=\E{z_3z_4}$, this is a positive multiple of
\[\arraycolsep=0.5pt\begin{array}{rllll}
0\leq&\|XY-X-Y+\one\|_n^2&&&\\
=&\langle XY-X,XY-Y\rangle_n
&-\langle XY-X,X-\one\rangle_n
&-\langle Y-\one,XY-Y\rangle_n
&+\langle Y-\one,X-\one\rangle_n\\
=&\langle \one-Y^{-1},\one-X^{-1}\rangle_n
&-\langle Y-\one,\one-X^{-1}\rangle_n
&-\langle \one-Y^{-1},X-\one\rangle_n
&+\langle Y-\one,X-\one\rangle_n\\
=&4\cdot\langle \one-Y,\one-X\rangle_n.&&&
\end{array}\]
This also means that the dominating term on the right-hand side of equation~\ref{eqn:harmonicity} is always nonnegative, whence the second-most dominating term, $\langle\Delta_{z_1z_2},\Delta_{z_1z_3}\rangle_n$, must converge to a nonpositive value.
\end{proof}

In the next section we will show that a bilinear form as in Lemma~\ref{lem:limit-form} cannot exist.

\begin{remark}
Although $\Gamma_n$ does not have harmonic cocycles except for finitely many $n$, there do exist cocycles with $\langle\Delta_{ab},\Delta_{cd}\rangle=0$.
These can be constructed from the group representation \[\Gamma_n\to\SAut(F_n)\to\SL(n,\Z)\to\SL(n,\Z/2\Z)\to\mathrm{Permutations}\big((\Z/2\Z)^n\big)\curvearrowright\R^{|(\Z/2\Z)^n|}\]
together with a suitably chosen cyclic vector.
Therefore, we must also make use of equation~\ref{eqn:adjacent-big} to disprove the assumption of Lemma~\ref{lem:limit-form}.
\end{remark}

\section{Proof of the main result}

Assume that infinitely many of the groups $\Gamma_n$ do not have property~(T) and let $\langle\cdot,\cdot\rangle$ be the normalized bilinear form resulting from Lemma~\ref{lem:limit-form}.
From the properties of this form we want to derive a contradiction.
Recall that $\Phi\colon\Gamma\to\R$ is defined by $\Phi(\gamma)=\|\one-\gamma\|^2$.

We begin with an immediate consequence of equation~\ref{eqn:disjoint-exact}.

\begin{lemma}
If $\omega,\omega_1,\omega_2$ are words in $\mathcal{S}=\colim\mathcal{S}_n$ with $\supp(\omega_1)\cap\supp(\omega_2)=\emptyset$, then
\begin{align}\stepcounter{equation}
\label{eqn:disjoint-left}
\Phi(\omega)+\Phi(\omega_1\omega_2\omega)-\Phi(\omega_1\omega)-\Phi(\omega_2\omega)&=0\tag{\arabic{equation}a}\quad\text{and}\\
\label{eqn:disjoint-right}
\Phi(\omega)+\Phi(\omega\omega_1\omega_2)-\Phi(\omega\omega_1)-\Phi(\omega\omega_2)&=0.\tag{\arabic{equation}b}
\end{align}
\end{lemma}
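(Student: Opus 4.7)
The plan is to derive the identity from the orthogonality of a null vector. Start with the polarization
\[
\Phi(\alpha\beta)=\Phi(\alpha)+\Phi(\beta)-2\langle\one-\alpha^{-1},\one-\beta\rangle,
\]
obtained by expanding $\one-\alpha\beta=(\one-\alpha)+\alpha(\one-\beta)$ and invoking left-invariance. Apply this to $\omega_1\omega$, $\omega_2\omega$, and $(\omega_1\omega_2)\omega$. Using the commutativity $\omega_1\omega_2=\omega_2\omega_1$ from \ref{eqn:disjoint-commutes} and the algebraic identity
\[
\one-\omega_1^{-1}\omega_2^{-1}=(\one-\omega_1^{-1})+(\one-\omega_2^{-1})-(\one-\omega_1^{-1})(\one-\omega_2^{-1}),
\]
the difference $\Phi(\omega)+\Phi(\omega_1\omega_2\omega)-\Phi(\omega_1\omega)-\Phi(\omega_2\omega)$ collapses to $2\langle(\one-\omega_1^{-1})(\one-\omega_2^{-1}),\one-\omega\rangle$, provided one already knows $\Phi(\omega_1\omega_2)=\Phi(\omega_1)+\Phi(\omega_2)$ (equivalently, $\langle\one-\omega_1^{-1},\one-\omega_2\rangle=0$) in order to absorb the leftover $\Phi(\omega_1\omega_2)$ term.

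These residual facts are extensions of equation~\ref{eqn:disjoint-exact} from pairs of generators to pairs of words. The crucial one is
\[
\|(\one-\omega_1)(\one-\omega_2)\|^2=0\quad\text{whenever}\quad\supp(\omega_1)\cap\supp(\omega_2)=\emptyset,
\]
which I prove by induction on $|\omega_1|+|\omega_2|$: writing $\omega_1=S\omega_1'$ with a generator $S$, the telescope
\[
(\one-S\omega_1')(\one-\omega_2)=(\one-S)(\one-\omega_2)+S\cdot(\one-\omega_1')(\one-\omega_2)
\]
presents the element as a sum of two pieces that are both null by induction (the second via the left-invariance $\|Sx\|=\|x\|$), and Cauchy--Schwarz annihilates their cross term. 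A parallel induction on the inner product, using the now-established norm identity as a Cauchy--Schwarz input through the rewriting $S^{-1}(\one-\omega_2)=(\one-\omega_2)-(\one-\omega_2)(\one-S^{-1})$ (valid since $S^{-1}$ and $\omega_2$ commute), then gives $\langle\one-\omega_1,\one-\omega_2\rangle=0$, hence also $\langle\one-\omega_1^{-1},\one-\omega_2\rangle=0$ by applying the same argument to $\omega_1^{-1}$.

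Once the remaining orthogonality $\langle(\one-\omega_1^{-1})(\one-\omega_2^{-1}),\one-\omega\rangle=0$ is obtained by one more application of Cauchy--Schwarz to the norm identity, the left-hand identity \ref{eqn:disjoint-left} falls out of the expansion in paragraph one. The right-hand identity \ref{eqn:disjoint-right} follows from the left-hand one by substituting $\omega,\omega_1,\omega_2$ by their inverses and invoking $\Phi(\gamma^{-1})=\Phi(\gamma)$ together with $\omega_2^{-1}\omega_1^{-1}=\omega_1^{-1}\omega_2^{-1}$. The main obstacle is the inductive extension of \ref{eqn:disjoint-exact}; the rest is algebraic bookkeeping.
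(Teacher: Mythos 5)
Your proposal is correct, but it is organized differently from the paper. The paper first proves the single-letter case of \ref{eqn:disjoint-left} directly, by pairing the null vector $XY-X-Y+\one=(\one-X)(\one-Y)$ from equation \ref{eqn:disjoint-exact} against $\omega-\one$ via Cauchy--Schwarz and converting all inner products into $\Phi$-values; the general case is then obtained by a double telescoping of the resulting four-term $\Phi$-identity over the pairs of letters of $\omega_1$ and $\omega_2$. You instead telescope one level lower, inside the group algebra: your key intermediate statement, that $(\one-\omega_1)(\one-\omega_2)$ is a null vector of the form whenever $\supp(\omega_1)\cap\supp(\omega_2)=\emptyset$, is proved by induction on $|\omega_1|+|\omega_2|$ using the decomposition $(\one-S\omega_1')(\one-\omega_2)=(\one-S)(\one-\omega_2)+S(\one-\omega_1')(\one-\omega_2)$ and left-invariance, and everything else (the orthogonality $\langle\one-\omega_1^{-1},\one-\omega_2\rangle=0$, hence $\Phi(\omega_1\omega_2)=\Phi(\omega_1)+\Phi(\omega_2)$, and then \ref{eqn:disjoint-left} itself) follows by polarization and one more Cauchy--Schwarz. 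Both arguments rest on exactly the same inputs (equation \ref{eqn:disjoint-exact}, the commutation relation \ref{eqn:disjoint-commutes}, left-invariance, Cauchy--Schwarz), but your version buys a cleaner separation of concerns: the null-vector lemma is a reusable fact, and your bookkeeping makes explicit where the lack of \emph{right}-invariance of the form must be compensated --- a point that the paper's telescoping handles only if one reads its prefix decomposition as peeling letters off the front (i.e.\ indexing by suffixes), since $\Phi(X_iY_j\omega_{ij})$ and $\Phi(\omega_1^{\leq i}\omega_2^{\leq j}\omega)$ agree as written only up to that reordering. Two small points to tidy in your write-up: in the base of the norm-identity induction you should say that when $|\omega_1|=1$ you split $\omega_2$ instead, which is legitimate because $(\one-\omega_1)(\one-\omega_2)=(\one-\omega_2)(\one-\omega_1)$ for commuting words; and note that $\omega_1^{-1}$ is again a word over $\mathcal{S}$ with the same support (each $\E{ab}^{-1}=\E{a\inv{b}}$ is a generator), which is what lets you pass from $\omega_1$ to $\omega_1^{-1}$ in the orthogonality step.
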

\begin{proof}
First, consider the case where $\omega_1=X^{-1}$ and $\omega_2=Y^{-1}$ consist of a single letter.
Then, by equation~\ref{eqn:disjoint-exact} and the Cauchy--Schwarz inequality, we have
\begin{align*}
0\smash{\overset{\ref{eqn:disjoint-exact},\text{CS}}{=}}\hspace{-0.15cm}&-2\cdot\langle XY-X-Y+\one,\omega-\one\rangle-2\langle X-\one,Y^{-1}-\one\rangle\\
=&
-2\cdot\langle XY-\one,\omega-\one\rangle
+2\cdot\langle X-\one,\omega-\one\rangle
+2\cdot\langle Y-\one,\omega-\one\rangle
-2\cdot\langle X-\one,Y^{-1}-\one\rangle\\
=&
-\big(\|XY-\one\|^2+\|\omega-\one\|^2-\|XY-\omega\|^2\big)
+\big(\|X-\one\|^2+\|\omega-\one\|^2-\|X-\omega\|^2\big)\\
&+\big(\|Y-\one\|^2+\|\omega-\one\|^2-\|Y-\omega\|^2\big)
-\big(\|X-\one\|^2+\|Y^{-1}-\one\|^2-\|X-Y^{-1}\|^2\big)\\
=&
-\big(\Phi(XY)+\Phi(\omega)-\Phi(X^{-1}Y^{-1}\omega)\big)
+\big(\Phi(X)+\Phi(\omega)-\Phi(X^{-1}\omega)\big)\\
&+\big(\Phi(Y)+\Phi(\omega)-\Phi(Y^{-1}\omega)\big)
-\big(\Phi(X)+\Phi(Y)-\Phi(XY)\big)\\
=&\Phi(\omega)+\Phi(X^{-1}Y^{-1}\omega)
-\Phi(X^{-1}\omega)
-\Phi(Y^{-1}\omega).
\end{align*}
To obtain equation~\ref{eqn:disjoint-left}, let $X_i$ and $Y_i$ be the $i$-th letter in $\omega_1$, respectively $\omega_2$, let $\omega_1^{\leq i},\omega_2^{\leq i}$ be the subwords of the first $i$-many letters, and let $\omega_{ij}\vcentcolon=\omega_1^{\leq i-1}\omega_2^{\leq j-1}\omega$. Then
\begin{align*}
&\Phi(\omega)+\Phi(\omega_1\omega_2\omega)-\Phi(\omega_1\omega)-\Phi(\omega_2\omega)\\
=&\sum_{i=1}^{|\omega_1|}\sum_{j=1}^{|\omega_2|}\Phi\big(\omega_1^{\leq i-1}\omega_2^{\leq j-1}\omega\big)+\Phi\big(\omega_1^{\leq i}\omega_2^{\leq j}\omega\big)-\Phi\big(\omega_1^{\leq i}\omega_2^{\leq j-1}\omega\big)-\Phi\big(\omega_1^{\leq i-1}\omega_2^{\leq j}\omega\big)\\
=&\sum_{i=1}^{|\omega_1|}\sum_{j=1}^{|\omega_2|}\Phi(\omega_{ij})+\Phi(X_iY_j\omega_{ij})-\Phi(X_i\omega_{ij})-\Phi(Y_j\omega_{ij})=\sum_{i=1}^{|\omega_1|}\sum_{j=1}^{|\omega_2|}0=0.
\end{align*}
Equation~\ref{eqn:disjoint-right} follows by taking the inverse of all group elements, using $\Phi(\gamma^{-1})=\Phi(\gamma)$.
\end{proof}

Equations~\ref{eqn:disjoint-left} and~\ref{eqn:disjoint-right} are already very powerful. If one inserts these constraints into the semidefinite program that is used for the property~(T) computer proof, the calculation simplifies immensely. For our human proof, however, this is only the first step.
Next, we obtain additional information about $\Phi$ by evaluating it on certain very long words.

To every word $\omega$ over $\mathcal{S}$ one can associate a graph $\mathcal{G}$ with vertex set $\supp(\omega)$ and with one edge $(\tilde{a},\tilde{b})$ for every letter $\E{ab}$ in $\omega$. It is easiest to calculate with those words whose undirected graph is a tree. In the following we focus on \emph{star-shaped} words, where one fixed place $\tilde{x}$ appears in every letter and no other place appears in more than one letter.
We will say that two letters in a star-shaped word belong to the same \emph{type} if they are related by an element of $\Lambda$ that fixes $x$. The four possible types are $\E{xz},\E{\inv{x}z},\E{zx},\E{z\inv{x}}$.

Despite their simplicity, the star-shaped words witness some crucial phenomena of $\Gamma$, like the fact that the addition of only a single letter turns $\prod_i \E{xz_i}$ into the seemingly very distant $\E{yx}\cdot\prod_i\E{xz_i}=\big(\prod_i\E{yz_i}\big)\big(\prod_i\E{xz_i}\big)\E{yx}$, and similarly for $\prod_i\E{z_ix}$. This observation was the starting point for the following lemma.

\begin{lemma}\label{lem:cancellation}
Let the word $\omega=\omega_1 X_1 X_2 \omega_2$ be star-shaped around $\tilde{x}$, where $\omega_1,\omega_2$ are words in $\mathcal{S}$ and $X_1,X_2\in \mathcal{S}$ have the same type. Then
\begin{equation}\label{eqn:cancellation}
\Phi(\omega_1 X_1 X_2 \omega_2)=\Phi(\omega_1 X_1 \omega_2)+\Phi(X_1 X_2)-1.
\end{equation}
\end{lemma}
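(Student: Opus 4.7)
Since $\Phi(X_1)=1$ by normalisation, I would first rearrange the claim to
\[
\Phi(\omega_1 X_1 X_2 \omega_2) - \Phi(\omega_1 X_1 \omega_2) \;=\; \Phi(X_1 X_2) - \Phi(X_1),
\]
so that the statement reads: the incremental cost of inserting $X_2$ immediately after $X_1$ is independent of the star-shaped context $\omega_1,\omega_2$. Using $\Lambda$\nobreakdash-symmetry I would assume $X_1=\E{xa}$ and $X_2=\E{xb}$ with $\tilde a\neq\tilde b$; the star-shape hypothesis then forces $\tilde a,\tilde b$ to appear in no letter of $\omega$ other than $X_1,X_2$.

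The plan is an induction on $|\omega_1|+|\omega_2|$, with trivial base case $\omega_1=\omega_2=\one$. For the inductive step I would strip a single trailing letter $Y$ from $\omega_2$, writing $\omega_2=\omega_2' Y$ (the leading-letter case for $\omega_1$ being symmetric via inversion, using $\Phi(\gamma^{-1})=\Phi(\gamma)$), and reduce to the 4\nobreakdash-term identity
\[
\Phi(\omega_1 X_1 X_2 \omega_2' Y) + \Phi(\omega_1 X_1 \omega_2') \;=\; \Phi(\omega_1 X_1 \omega_2' Y) + \Phi(\omega_1 X_1 X_2 \omega_2'),
\]
which says that the incremental cost of $X_2$ is unaffected by appending~$Y$. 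If $Y$ had support disjoint from $X_2$ this identity would follow immediately from equation~\ref{eqn:disjoint-right}; the trouble is that every letter in a star-shaped word meets $\tilde x$, so $Y$ and $X_2$ always share the place~$\tilde x$.

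The main obstacle is precisely this forced overlap at~$\tilde x$. My plan to defeat it exploits the fact that $X_2=\E{xb}$ is only half-overlapping with the remainder of $\omega$, since the place~$\tilde b$ appears nowhere else. I would introduce a fresh auxiliary place~$\tilde y$ disjoint from $\supp(\omega)$ and invoke the conjugation identity highlighted in the commentary just before the lemma, $\E{yx}\cdot\prod_i \E{xz_i} = \bigl(\prod_i \E{yz_i}\bigr)\bigl(\prod_i \E{xz_i}\bigr)\E{yx}$, to rewrite the relevant subword in a form whose support components are genuinely disjoint (the auxiliary factors like $\E{yb}$ now have support $\{\tilde y,\tilde b\}$ disjoint from the rest of~$\omega$). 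Equation~\ref{eqn:disjoint-right} then applies, and transporting the result back yields the 4\nobreakdash-term identity. The hardest part will be the case analysis by the type of~$Y$ (four possibilities, each conjugating differently under~$\E{yx}$) and the telescopic cancellation of the auxiliary terms this bookkeeping generates. Should this become too intricate, a fallback is to prove the single-letter base case $|\omega_1|+|\omega_2|=1$ directly from equation~\ref{eqn:disjoint-exact} and Cauchy--Schwarz, mirroring the proof of equations~\ref{eqn:disjoint-left}/\ref{eqn:disjoint-right}, and extend by telescoping.
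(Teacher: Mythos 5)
Your reformulation---that the incremental cost $\Phi(\omega_1X_1X_2\eta)-\Phi(\omega_1X_1\eta)$ of inserting $X_2$ should be independent of the star-shaped context---is exactly the right reading of the lemma, and the conjugation identity $X_1X_2=\gamma X_2\gamma^{-1}$ with $\supp(\gamma)\subset\{\tilde a,\tilde b\}$ disjoint from the rest of $\omega$ is indeed the relation the paper exploits. But there is a genuine gap at the heart of your inductive step. The manipulations you propose (commute $\gamma,\gamma^{-1}$ past the disjointly supported context and apply equations~\ref{eqn:disjoint-left}/\ref{eqn:disjoint-right}) can only ever show that a quantity such as
\[
\big(\Phi(\mu_1\gamma X_2\gamma^{-1}\mu_2)-\Phi(\mu_1\gamma X_2\gamma^{-1})\big)-\big(\Phi(\mu_1X_2\mu_2)-\Phi(\mu_1X_2)\big)
\]
is \emph{independent of the context} $\mu_1$ (this is the paper's equation~\ref{eqn:quadrupel-equation}, and symmetrically for $\mu_2$), never that it vanishes. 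Chasing your 4-term identity through these relations always leaves a one-sided residue of the form $\Phi(\mu_1X_1X_2)-\Phi(\mu_1X_2)-\big(\Phi(X_1X_2)-\Phi(X_2)\big)$, which is a rearrangement of the claim itself (the case $\omega_2=\one$); the same happens again when you try to strip letters from $\omega_1$, so the induction is circular. The linear identities~\ref{eqn:disjoint-left}/\ref{eqn:disjoint-right} together with conjugation simply do not determine the value.

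The missing input is metric and asymptotic, and it is the actual content of the paper's proof: one applies the context-independence to the words $X^{(k)}$ ($k$ letters of one type) to deduce that $f(k)=\Phi(\omega''_1X^{(k)})$ is a quadratic polynomial in $k$; positivity of $\Phi$ together with the triangle-inequality bound $\sqrt{f(2k)}\le\sqrt{f(k)}+2\sqrt{k}$ (conjugating $X^{(2k)}$ down to $X^{(k)}$ by an element of norm $\sqrt{k}$) kills the quadratic term; and a second triangle-inequality argument (the distances $\|\one-X^{(k)}\|$ and $\|(\omega''_1)^{-1}-X^{(k)}\|$ differ boundedly, so the two linear functions under the square roots must have equal slope) forces the one-sided residue to vanish, independently of $\omega''_1$. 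None of these steps appears in your plan. Your fallback---deducing a single-letter base case from equation~\ref{eqn:disjoint-exact} and Cauchy--Schwarz---also cannot work: that equation is an exact orthogonality for generators with \emph{disjoint} support, whereas here $X_1,X_2,Y$ all meet the place $\tilde x$, and for overlapping generators the form only satisfies the one-sided inequality~\ref{eqn:adjacent-big}, which is exploited elsewhere in the paper and by a different mechanism.
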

\begin{proof}
The crucial property of $\Gamma$ that we need for this proof is that $X_1 X_2$ is conjugated to $X_2$ by a group element that can be written as a word $\gamma$ with support contained in $\big(\supp(X_1)\cup\supp(X_2)\big)\setminus\{\tilde{x}\}$.
Indeed, equation~\ref{eqn:pentagram-relation} gives
\begin{align*}
\E{xa}\E{xb}=\E{b\inv{a}}\E{xb}\E{ba},\qquad \E{ax}\E{bx}=\E{ab}\E{bx}\E{a\inv{b}},
\end{align*}
and analogously for $x$ replaced by $\inv{x}$.
Let the notation $X^{(k)}$ denote a star-shaped word around $\tilde{x}$ consisting of $k$-many letters of the same type as $X_1,X_2$, with the understanding that if $X^{(k)}$ is part of a longer word, then no place in $\supp(X^{(k)})\setminus\{\tilde{x}\}$ appears a second time in that word.
The word $X^{(k)}$ is unique up to symmetry.

If $\omega'_1,\omega'_2$ are words with $\supp(\omega'_1)\cap\supp(\gamma)=\supp(\omega'_2)\cap\supp(\gamma)=\emptyset$, then
\begin{equation}\label{eqn:quadrupel-equation}\arraycolsep=0pt
\begin{array}{rlllll}
&\big(\Phi(\omega'_1X_1X_2\omega'_2)&-\Phi(\omega'_1X_1X_2)&\big)-\big(\Phi(\omega'_1\gamma X_1X_2\gamma^{-1}\omega'_2)&-\Phi(\omega'_1\gamma X_1X_2\gamma^{-1})&\big)\\
\overset{\ref{eqn:disjoint-right}}{=}&\big(\Phi(\omega'_1X_1X_2\gamma^{-1}\omega'_2)&-\Phi(\omega'_1X_1X_2\gamma^{-1})&\big)-\big(\Phi(\omega'_1\gamma X_1X_2\gamma^{-1}\omega'_2)&-\Phi(\omega'_1\gamma X_1X_2\gamma^{-1})&\big)\\
=&\big(\Phi(\omega'_1\gamma X_1X_2\gamma^{-1})&-\Phi(\omega'_1X_1X_2\gamma^{-1})&\big)+\big(\Phi(\omega'_1X_1X_2\gamma^{-1}\omega'_2)&-\Phi(\omega'_1\gamma X_1X_2\gamma^{-1}\omega'_2)&\big)\\
\overset{\ref{eqn:disjoint-left}}{=}&\big(\Phi(\gamma X_1X_2\gamma^{-1})&-\Phi(X_1X_2\gamma^{-1})&\big)+\big(\Phi(X_1X_2\gamma^{-1}\omega'_2)&-\Phi(\gamma X_1X_2\gamma^{-1}\omega'_2)&\big)\\
=&\big(\Phi(X_1X_2\gamma^{-1}\omega'_2)&-\Phi(X_1X_2\gamma^{-1})&\big)-(\Phi(\gamma X_1X_2\gamma^{-1}\omega'_2)&-\Phi(\gamma X_1X_2\gamma^{-1})&\big)\\
\overset{\ref{eqn:disjoint-right}}{=}&\big(\Phi(X_1X_2\omega'_2)&-\Phi(X_1X_2)&\big)-\big(\Phi(\gamma X_1X_2\gamma^{-1}\omega'_2)&-\Phi(\gamma X_1X_2\gamma^{-1})&\big)
\end{array}
\end{equation}
follows from equations~\ref{eqn:disjoint-right} and~\ref{eqn:disjoint-left}.
We apply this to the case of $\omega'_1=\omega''_1 X^{(k)}$, $\omega'_2=X^{(1)}$ and find that the value
\begin{equation*}\arraycolsep=0pt
\begin{array}{rlllll}
&\big(\Phi(\omega''_1 X^{(k+3)})&-\Phi(\omega''_1 X^{(k+2)})&\big)-\big(\Phi(\omega''_1 X^{(k+2)})&-\Phi(\omega''_1 X^{(k+1)})&\big)\\
=&\big(\Phi(\omega'_1X_1X_2\omega'_2)&-\Phi(\omega'_1X_1X_2)&\big)-\big(\Phi(\omega'_1\gamma X_1X_2\gamma^{-1}\omega'_2)&-\Phi(\omega'_1\gamma X_1X_2\gamma^{-1})&\big)\\
\overset{\ref{eqn:quadrupel-equation}}=&\big(\Phi(X_1X_2\omega'_2)&-\Phi(X_1 X_2)&\big)-\big(\Phi(\gamma X_1X_2\gamma^{-1}\omega'_2)&-\Phi(\gamma X_1 X_2\gamma^{-1})&\big)\\
=&\big(\Phi(X^{(3)})&-\Phi(X^{(2)})&\big)-\big(\Phi(X^{(2)})&-\Phi(X^{(1)})&\big)
\end{array}
\end{equation*}
does not depend on $k$. Hence, the function $f\colon k\mapsto\Phi(\omega''_1 X^{(k)})$ is of the form $\lambda_2 k^2+\lambda_1 k+\lambda_0$.

Because $\Phi$ has nonnegative values, $\lambda_2\geq 0$. We show that $\lambda_2=0$.
Let $X'_1,\dots,X'_{2n}$ be the letters in $X^{(2n)}$, let $\gamma_1,\dots,\gamma_n$ be such that $\gamma_iX'_{2i-1}X'_{2i}\gamma_i^{-1}=X'_{2i}$, and let $\gamma=\prod\gamma_i$. Because the supports of the $\gamma_i$ are all disjoint, repeated application of equation~\ref{eqn:disjoint-left} shows that $\|\one-\gamma\|^2=\Phi(\gamma)=k$. By the triangle inequality,
\begin{align*}
\sqrt{f(2k)}&=\|\one-\omega''_1 X^{(2k)}\|
=\|\one-\gamma^{-1}\omega''_1 X^{(k)}\gamma\|\\
&\leq\|\one-\gamma^{-1}\|+\|\gamma^{-1}-\gamma^{-1}\omega''_1 X^{(k)}\|+\|\gamma^{-1}\omega''_1 X^{(k)}-\gamma^{-1}\omega''_1 X^{(k)}\gamma\|\\
&=\|\one-\gamma\|+\|\one-\omega''_1 X^{(k)}\|+\|\one-\gamma\|=\sqrt{f(k)}+2\sqrt{k}.
\end{align*}
This is not possible if $\lambda_2>0$ and $k\to\infty$, whence $f$ is linear.

By the triangle inequality, the difference between the two distances $\|\one-X^{(k)}\|$ and $\|(\omega''_1)^{-1}-X^{(k)}\|$ is bounded by $\|\one-\omega''_1\|$, independent of $k$, and since
\begin{equation*}\arraycolsep=0pt
\begin{array}{rll}
\|\one-X^{(k)}\|&=\sqrt{\Phi(X^{(k)})}&\overset{f\text{ lin.}}=\sqrt{\Phi(X^{(1)})+(k-1)\big(\Phi(X^{(2)})-\Phi(X^{(1)})\big)}\qquad\text{and}\\
\|(\omega''_1)^{-1}-X^{(k)}\|&=\sqrt{\Phi(\omega''_1 X^{(k)})}&\overset{f\text{ lin.}}=\sqrt{\Phi(\omega''_1 X^{(1)})+(k-1)\big(\Phi(\omega''_1 X^{(2)})-\Phi(\omega''_1 X^{(1)})\big)},
\end{array}
\end{equation*}
this can only happen when, independent of $\omega''_1$, it holds
\begin{align}\stepcounter{equation}
\Phi(\omega''_1 X^{(2)})-\Phi(\omega''_1 X^{(1)})&=\Phi(X^{(2)})-\Phi(X^{(1)})\qquad\text{and consequently}\label{eqn:cancel-double-right}\tag{\arabic{equation}a}\\ \Phi(X^{(2)}\omega''_1)-\Phi(X^{(1)}\omega''_1)&=\Phi(X^{(2)})-\Phi(X^{(1)})\qquad\text{(using $\Phi(\omega^{-1})=\Phi(\omega)$).}\label{eqn:cancel-double-left}\tag{\arabic{equation}b}
\end{align}
Finally, we obtain
\begin{equation*}\arraycolsep=0pt
\begin{array}{clllll}
&\big(\Phi(\omega_1 X^{(2)}\omega_2)&-\Phi(\omega_1 X^{(1)}\omega_2)&\big)-\big(\Phi(X^{(2)})&-\Phi(X^{(1)})&\big)\\
\overset{\ref{eqn:cancel-double-right}}=&\big(\Phi(\omega_1 X^{(2)}\omega_2)&-\Phi(\omega_1X^{(1)}\omega_2)&\big)-\big(\Phi(\omega_1 X^{(2)})&-\Phi(\omega_1 X^{(1)})&\big)\\
=&\big(\Phi(\omega_1 X^{(2)}\omega_2)&-\Phi(\omega_1 X^{(2)})&\big)-\big(\Phi(\omega_1\gamma X^{(2)}\gamma^{-1}\omega_2)&-\Phi(\omega_1\gamma X^{(2)}\gamma^{-1})&\big)\\
\overset{\ref{eqn:quadrupel-equation}}=&\big(\Phi(X^{(2)}\omega_2)&-\Phi(X^{(2)})&\big)-\big(\Phi(\gamma X^{(2)}\gamma^{-1}\omega_2)&-\Phi(\gamma X^{(2)}\gamma^{-1})&\big)\\
=&\big(\Phi(X^{(2)}\omega_2)&-\Phi(X^{(2)})&\big)-\big(\Phi(X^{(1)}\omega_2)&-\Phi(X^{(1)})&\big)\overset{\ref{eqn:cancel-double-left}}=0,
\end{array}
\end{equation*}
as claimed.
\end{proof}

We proceed according to our intermediate goal of calculating $\Phi(\omega)$ for star-shaped words. The next lemma shows that these words behave to some degree as if they would commute.
Note that the following proof is more specific to the group $\Gamma$ than that of the previous lemma. It uses, for example, that only four different types of letters can appear in a star-shaped word.

\begin{lemma}\label{lem:reorder}
Let $\omega=\omega_1 XY\omega_2$ be a star-shaped word, where $X,Y\in \mathcal{S}$ and $\omega_1,\omega_2$ are words in $\mathcal{S}$ that both contain all four types of letters. Then
\begin{equation}\label{eqn:reorder}
\Phi(\omega_1 XY\omega_2)=\Phi(\omega_1 Y\mkern-3muX\omega_2).
\end{equation}
\end{lemma}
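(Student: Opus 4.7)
The plan is to split into cases according to the types of $X$ and $Y$.

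\emph{Easy cases.} If $X$ and $Y$ are of the same type, their non-$x$ leaves $\tilde{c}_X, \tilde{c}_Y$ are distinct and, by star-shapedness, appear in no other letter of $\omega$. The $\Lambda$-automorphism transposing $\tilde{c}_X \leftrightarrow \tilde{c}_Y$ (with signs adjusted so that $X \leftrightarrow Y$) pointwise fixes $\omega_1$ and $\omega_2$ while swapping $XY \leftrightarrow YX$, so $\Lambda$-invariance of $\Phi$ yields \ref{eqn:reorder}; the hypothesis on $\omega_1, \omega_2$ is not used here. If instead $X, Y$ fall into one of the commuting type pairs $(\E{xz}, \E{\bar{x}z'})$ or $(\E{zx}, \E{z'\bar{x}})$, the relation \ref{eqn:same-type-commutes} gives $XY = YX$ in $\Gamma$ and \ref{eqn:reorder} is trivial.

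\emph{Main case.} After applying the sign-flip in $\Lambda$ at $\tilde{x}$, the remaining case reduces to $X = \E{xa}$ paired with $Y \in \{\E{bx}, \E{b\bar{x}}\}$, $\tilde{a} \neq \tilde{b}$. In both sub-cases the pentagram relation~\ref{eqn:pentagram-relation} yields $[Y, X] = \E{ba}$, whence $YX = \E{ba}\, XY$ in $\Gamma$. Since $\supp(\E{ba}) = \{\tilde{a}, \tilde{b}\}$ is disjoint from $\supp(\omega_1) \cup \supp(\omega_2)$ (by star-shapedness), repeated use of \ref{eqn:disjoint-left} and \ref{eqn:disjoint-right} slides $\E{ba}$ out of the product, and the lemma reduces to the equality
\[
\Phi(\E{ba}\, W) = \Phi(W), \qquad W := \omega_1 X Y \omega_2.
\]

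\emph{The hard step} is this last equality. The obstacle is that $\supp(\E{ba})$ meets $\supp(W)$ at $\tilde{a}, \tilde{b}$ (through $X$ and $Y$), so no direct disjoint-support argument removes $\E{ba}$; this is where the hypothesis that $\omega_1, \omega_2$ each contain all four types is essential. My plan is to pick a Type-$\E{zx}$ letter $\E{b'x} \in \omega_1$ and a Type-$\E{xz}$ letter $\E{xa'} \in \omega_2$ with $a', b'$ fresh, and use Lemma \ref{lem:cancellation} to insert arbitrarily many same-type padding letters next to these (each costing a fixed $\Phi$-increment that is the same on both sides of the desired equality). One then exploits an asymptotic triangle inequality — in the spirit of the $\lambda_2 = 0$ step in the proof of Lemma \ref{lem:cancellation} — via a conjugator $\gamma$, built from disjoint commutator pairs whose support avoids $\{\tilde{a}, \tilde{b}\}$, that trades the long same-type star against a short word. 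In the limit as the padding goes to infinity, $\Phi(\E{ba}\, W) - \Phi(W)$ is pinned to $0$. The delicate part is choosing $\gamma$ so that the disjoint-support hypotheses for \ref{eqn:disjoint-left} and \ref{eqn:disjoint-right} remain valid at each step and both sides are conjugated consistently; the availability of all four types in $\omega_1$ and $\omega_2$ is exactly what permits this bookkeeping.
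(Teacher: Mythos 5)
Your easy cases and the initial reduction are sound, but there are two problems: one local and fixable, one that leaves the heart of the lemma unproved.

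The fixable one: in the main case you retain both sub-cases $Y=\E{bx}$ and $Y=\E{b\inv{x}}$ and assert that the pentagram relation gives $[Y,X]=\E{ba}$ in both. This is false for $Y=\E{b\inv{x}}$, $X=\E{xa}$: relation~\ref{eqn:pentagram-relation} applies only when the inner places match with sign, as in $[\E{uv},\E{vw}]=\E{uw}$, and here they are $\inv{x}$ and $x$. Already in $\Aut(F_n)$ one computes that $[\E{b\inv{x}},\E{xa}]$ sends $b\mapsto bxa^{-1}x^{-1}$, a transvection by a conjugate of $a^{-1}$, which is not an element of $\mathcal{S}$ (its image in $\SL(n,\Z)$ would in any case be that of $\E{b\inv{a}}$, not $\E{ba}$). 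The cure is the one the paper uses: besides the sign flip at $\tilde{x}$ you must also exploit $\Phi(\gamma)=\Phi(\gamma^{-1})$, i.e.\ reverse the word; this turns the pair $\{\E{xa},\E{b\inv{x}}\}$ into one of the pentagram-compatible form and swaps the roles of $\omega_1$ and $\omega_2^{-1}$, which is harmless for the hypothesis.

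The serious gap: after sliding $C=\E{ba}^{\pm1}$ past $\omega_1$ you have correctly reduced the lemma to $\Phi(CW)=\Phi(W)$, but that equality carries essentially the entire content of the lemma, and your treatment of it is a plan rather than a proof. The asymptotic devices you invoke --- padding by same-type letters via Lemma~\ref{lem:cancellation} and the conjugator/triangle-inequality trick from its proof --- only control second differences and slopes of functions such as $k\mapsto\Phi(\cdot\,X^{(k)}\,\cdot)$; they show such functions are affine with matching slopes but never pin down the constant term. So ``in the limit the difference is pinned to $0$'' does not follow from anything you have set up, and no conjugator actually relating $CW$ to $W$ is exhibited. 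The paper's mechanism is different and avoids proving $\Phi(CW)=\Phi(W)$ in isolation: it peels the first letter $Z$ off $\omega_2$. If $Z$ has the same type as $X$, one replaces the prefix $\omega_1$ by a fresh letter $W'$ of that type using~\ref{eqn:disjoint-left}, rewrites $W'CYXZ\omega_2'=W'XYZ\omega_2'$, and applies Lemma~\ref{lem:cancellation} to the adjacent same-type pairs $W'X$ and $XZ$; the two residual words $XYZ\omega_2'$ and $W'YX\omega_2'$ are then literally $\Lambda$-equivalent, so the difference vanishes --- the transposition of $X$ and $Y$ is absorbed by a neighbouring letter of the same type. If $Z$ is of one of the other two types, a short computation with~\ref{eqn:disjoint-left}, \ref{eqn:disjoint-right} and~\ref{eqn:cancellation} shows that deleting $Z$ does not change $\Phi(\omega_1XY\omega_2)-\Phi(\omega_1YX\omega_2)$, and one iterates until a letter of the type of $X$ or $Y$ is reached --- this is exactly where the hypothesis that $\omega_2$ contains all four types enters. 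You would need an argument of comparable precision for your hard step; as written, the proposal does not establish the lemma.
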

\begin{proof}
If $X$ and $Y$ commute or are of the same type, the statement is obvious. Let $\tilde{x}$ be the recurring place of $\omega$. Then the remaining options for $\{X,Y\}$ are
$\{\E{ax},\E{xb}\}$,
$\{\E{ax},\E{\inv{x}b}\}$,
$\{\E{a\inv{x}},\E{xb}\}$ and
$\{\E{a\inv{x}},\E{\inv{x}b}\}$,
for some $a,b\in\mathcal{Z}\cup\mathcal{Z}^{-1}$.
Since the statement is invariant under $\Lambda$-symmetry and taking inverses, the last three cases can be reduced to the first one, $\{X,Y\}=\{\E{ax},\E{xb}\}$. Note also that the roles of $X$ and $Y$ are interchangeable.
By equation~\ref{eqn:pentagram-relation}, there exists a generator $C=\E{ab}^{\pm 1}$ such that $XY=CY\mkern-3muX$.

Let $Z$ be the first letter in $\omega_2=Z\omega'_2$. We will first assume that $Z$ has the same type as either $X$ or $Y$, say $X$, and let $W$ be another letter of that type. Then
\begin{align*}\arraycolsep=0pt
\begin{array}{clll}
&\Phi(\omega_1 XY\omega_2)&-\Phi(\omega_1 Y\mkern-3muX\omega_2)\\=
&\Phi(\omega_1 CY\mkern-3muX\mkern-2muZ\omega'_2)&-\Phi(\omega_1Y\mkern-3muX\mkern-2muZ\omega'_2)\\
\overset{\ref{eqn:disjoint-left}}=&\Phi(W\mkern-2muXY\mkern-2muZ\omega'_2)&-\Phi(WY\mkern-3muX\mkern-2muZ\omega'_2)\\
\overset{\ref{eqn:cancellation}}=&\big(\Phi(XY\mkern-2muZ\omega'_2)+\Phi(W\mkern-2muX)-1\big)&-\big(\Phi(WY\mkern-3muX\omega'_2)+\Phi(X\mkern-2muZ)-1\big)
=0,
\end{array}
\end{align*}
as claimed.

If, however, $Z$ is of type $\E{\inv{x}c}$ or $\E{c\inv{x}}$, then $Z$ must commute with one of the letters $X,Y$, say $X$, and for the other letter equation~\ref{eqn:pentagram-relation} and its inverse give $Y\mkern-2muZ=ZY\mkern-2muC'$, for a generator $C'$ commuting with $X$.
Indeed, if $Z=\E{\inv{x}c}$, then $Y=\E{ax}$ and $C'=\E{ac}$, while if $Z=\E{c\inv{x}}$, then $Y=\E{xb}$ and $C'=\E{cb}$. Note that in both cases the word $CY\mkern-2muC'$ is star-shaped, and either all three letters are of the same type or two letters are of the same type and the third commutes with both.
Hence, we have
\begin{align*}\arraycolsep=0pt
\begin{array}{clllll}
&\big(\Phi(\omega_1 XY\mkern-2muZ\omega'_2)&-\Phi(\omega_1 Y\mkern-3muX\mkern-2muZ\omega'_2)&\big)-\big(\Phi(\omega_1 XY\omega'_2)&-\Phi(\omega_1 Y\mkern-3muX\omega'_2)&\big)\\
\overset{\ref{eqn:disjoint-left}}=&\big(\Phi(Z^{-1}XY\mkern-2muZ\omega'_2)&-\Phi(Z^{-1}Y\mkern-3muX\mkern-2muZ\omega'_2)&\big)-\big(\Phi(XY\omega'_2)&-\Phi(Y\mkern-3muX\omega'_2)&\big)\\
=&\big(\Phi(XY\mkern-2muC'\omega'_2)&-\Phi(XY\omega'_2)&\big)+\big(\Phi(Y\mkern-3muX\omega'_2)&-\Phi(Y\mkern-3muXC'\omega'_2)&\big)\\
\overset{\ref{eqn:disjoint-right}}=&\big(\Phi(XY\mkern-2muC'X^{-1})&-\Phi(XY\mkern-3muX^{-1})&\big)+(\Phi(Y\mkern-3muXX^{-1})&-\Phi(Y\mkern-3muXC'X^{-1})&\big)\\
=&\big(\Phi(CY\mkern-2muC')&-\Phi(CY)&\big)+\big(\Phi(Y)&-\Phi(Y\mkern-2muC')&\big)\overset{\ref{eqn:cancellation}}=0.
\end{array}
\end{align*}

By the preceding equation, it suffices to deal with the case where the first letter has been removed from $\omega_2$.
We repeat this step of removing letters until we reach a letter in $\omega_2$ that has the same type as $X$ or $Y$. Then the statement follows as before.
\end{proof}

At this point equation~\ref{eqn:adjacent-big} enters the picture and provides another motivation for our interest in star-shaped words. For a fixed $x\in\mathcal{Z}$ and for $z_1,z_2,\ldots\in\mathcal{Z}\setminus\{x\}$ distinct, it follows from $\Lambda$-symmetry that for $k\to\infty$
\begin{equation}\label{eqn:adjacent-exact}
0\leq\langle{\textstyle\frac{1}{k}\sum_{i=1}^k\Delta_{xz_i},\frac{1}{k}\sum_{i=1}^k\Delta_{xz_i}}\rangle\to\langle\Delta_{xz_1},\Delta_{xz_2}\rangle\overset{\ref{eqn:adjacent-big}}\leq 0.
\end{equation}
By the Cauchy--Schwarz inequality this implies that for any word $\omega$ and for $\tilde{a}\notin\supp\omega$
\begin{equation*}\arraycolsep=0pt
\begin{array}{rcl}
0&\overset{\text{CS}}=&\lim\,\langle\one-\omega,\frac{1}{k}\sum_{i=1}^k\Delta_{xz_i}\rangle
=\langle\one-\omega,\Delta_{xa}\rangle\\
&=&\phantom{+}
\langle\one-\omega,\one-\E{xa}\rangle
+\phantom{2}\langle\one-\omega,\one-\E{\inv{x}a}\rangle
+\phantom{2}\langle\one-\omega,\one-\E{ax}\rangle
+\phantom{2}\langle\one-\omega,\one-\E{a\inv{x}}\rangle\\
&&+\langle\one-\omega,\one-\E{x\inv{a}}\rangle
+\phantom{2}\langle\one-\omega,\one-\E{\inv{x}\inv{a}}\rangle
+\phantom{2}\langle\one-\omega,\one-\E{\inv{a}x}\rangle
+\phantom{2}\langle\one-\omega,\one-\E{\inv{a}\inv{x}}\rangle\\
&=&\hspace{\widthof{+}-\widthof{2}}2\langle\one-\omega,\one-\E{xa}\rangle+2\langle\one-\omega,\one-\E{\inv{x}a}\rangle+2\langle\one-\omega,\one-\E{ax}\rangle+2\langle\one-\omega,\one-\E{a\inv{x}}\rangle\\
&=&\phantom{+}\big(\|\one-\omega\|^2+\|\one-\E{xa}\|^2-\|\E{xa}-\omega\|^2\big)+\big(\|\one-\omega\|^2+\|\one-\E{\inv{x}a}\|^2-\|\E{\inv{x}a}-\omega\|^2\big)\\
&&+\big(\|\one-\omega\|^2+\|\one-\E{ax}\|^2-\|\E{ax}-\omega\|^2\big)+\big(\|\one-\omega\|^2+\|\one-\E{a\inv{x}}\|^2-\|\E{a\inv{x}}-\omega\|^2\big)\\
&=&4\cdot\Phi(\omega)+4-\Phi(\E{x\inv a}\omega)-\Phi(\E{\inv{x}\inv a}\omega)-\Phi(\E{a\inv{x}}\omega)-\Phi(\E{ax}\omega).
\end{array}
\end{equation*}
Applying the above equation to $\omega$ and $\omega^{-1}$ gives
\begin{align}\stepcounter{equation}
\label{eqn:expand-left}
4\cdot\Phi(\omega)+4=\Phi(\E{xa}\omega)+\Phi(\E{\inv{x}a}\omega)+\Phi(\E{ax}\omega)+\Phi(\E{a\inv{x}}\omega)\tag{\arabic{equation}a},\\
\label{eqn:expand-right}
4\cdot\Phi(\omega)+4=\Phi(\omega \E{xa})+\Phi(\omega \E{\inv{x}a})+\Phi(\omega \E{ax})+\Phi(\omega \E{a\inv{x}}).\tag{\arabic{equation}b}
\end{align}

Note that when $\omega$ is star-shaped around $\tilde{x}$, the same is true for all words on the right hand sides of equations~\ref{eqn:expand-left} and~\ref{eqn:expand-right}.

\begin{remark}
Equation~\ref{eqn:adjacent-exact} can be viewed as a special case of a more general tactic to produce information about $\Phi$.
Namely, define the Caesaro means
\begin{equation*}
M^k_{x\cdot}\vcentcolon=\frac{1}{k}\sum_{i=1}^k \E{xz_i},\quad
M^k_{\inv{x}\cdot}\vcentcolon=\frac{1}{k}\sum_{i=1}^k \E{\inv{x}z_i},\quad
M^k_{\cdot x}\vcentcolon=\frac{1}{k}\sum_{i=1}^k \E{z_i x},\quad
M^k_{\cdot\inv{x}}\vcentcolon=\frac{1}{k}\sum_{i=1}^k \E{z_i\inv{x}}.
\end{equation*}
When $\varphi_k,\psi_k\in\R\Gamma$ are products in $\{M^k_{x\cdot},M^k_{\inv{x}\cdot},M^k_{\cdot x},M^k_{\cdot\inv{x}}\}\cup\mathcal{S}$, then the inner product $\langle\varphi_k-\one,\psi_k-\one\rangle$ expands into a sum that can be simplified with $\Lambda$-symmetry and that converges to the value of its dominating summand as $k\to\infty$. One can think of $\lim\varphi_k-\one$ and $\lim\psi_k-\one$ as weak limits in the Hilbert space induced by $\langle\cdot,\cdot\rangle$.
These limit points can be used in the same way as regular group elements in both human arguments and the SDP computer proofs, in order to obtain additional inequalities.
\end{remark}

\begin{lemma}\label{lem:star-shaped-values}
Let $\omega$ be a star-shaped word of length $|\omega|$. Then $\Phi(\omega)=|\omega|$.
\end{lemma}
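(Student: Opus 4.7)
The plan is to induct on $|\omega|$, the critical input being the identity
\[
\Phi(\omega S) = \Phi(\omega) + 1
\]
whenever $S$ is a star-shaped letter around $\tilde{x}$ whose second place $\tilde{a}$ lies outside $\supp\omega$. My first step is to extract from the proof of Lemma~\ref{lem:cancellation} the stronger statement $\Phi(X^{(k)}) = k$: specializing that argument to $\omega_1'' = \one$, the function $f(k) = \Phi(X^{(k)})$ was shown to be of the form $\lambda_1 k + \lambda_0$, and combined with $f(0) = \Phi(\one) = 0$ and $f(1) = 1$ this forces $f(k) = k$. By $\Lambda$-symmetry the same holds for each of the four letter types. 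In particular $\langle \E{xz_i} - \one, \E{xz_j} - \one\rangle = 1 - \tfrac{1}{2}\Phi(\E{x\inv{z_i}}\E{xz_j}) = 0$ for $i \neq j$.

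To obtain the key identity I would deploy the Cesaro-mean tactic from the remark following equation~\ref{eqn:adjacent-exact}. Choosing distinct $z_1, \dots, z_k \in \mathcal{Z}\setminus(\supp\omega \cup \{\tilde{x}\})$, the mean $M^k_{x\cdot}\vcentcolon=\tfrac{1}{k}\sum_{i=1}^k \E{xz_i}$ has $\|M^k_{x\cdot} - \one\|^2 = 1/k \to 0$ by the vanishing of the off-diagonal inner products above, and analogously for $M^k_{\inv{x}\cdot}$, $M^k_{\cdot x}$, $M^k_{\cdot \inv{x}}$. By $\Lambda$-symmetry permuting the fresh $z_i$'s, the value $\langle \omega - \one, \E{xz_i} - \one\rangle$ is independent of $i$, hence equals $\langle \omega - \one, M^k_{x\cdot} - \one\rangle$. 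By Cauchy--Schwarz this tends to $0$, and being constant in $k$ it is identically zero. Expanding and relabeling $z_1 \mapsto a$ yields $\Phi(\E{xa}\omega) = \Phi(\omega) + 1$ whenever $\tilde{a} \notin \supp\omega$, and the three remaining Cesaro means produce the analogous identities for the other three letter types. Applying $\Phi(\gamma) = \Phi(\gamma^{-1})$ furnishes the right-multiplication version.

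With this in hand the induction is immediate: $\Phi(\one) = 0$, and for a star-shaped $\omega = \omega' S$ of length $k \geq 1$, the second place of $S$ does not appear in $\omega'$ by star-shapedness, so $\Phi(\omega) = \Phi(\omega') + 1 = k$ by the inductive hypothesis. The main obstacle I anticipate is the initial step: one must notice that the linearity of $f$ established within the proof of Lemma~\ref{lem:cancellation} extends down to $k = 0$ (where $f(0) = \Phi(\omega''_1)$), so that the choice $\omega''_1 = \one$ pins the linear coefficient to be $1$ and thereby yields $\Phi(X^{(2)}) = 2$. From the cancellation lemma's statement alone one only obtains $f(k)$ affine for $k \geq 1$, which leaves an unknown slope; the additional fact that $f$ remains affine through $k = 0$ is what forces the normalization and makes the Cesaro computation go through.
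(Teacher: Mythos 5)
Your overall architecture is genuinely different from the paper's: you first try to pin down $\Phi(X^{(2)})=2$ (equivalently $\langle\one-\E{xa},\one-\E{xb}\rangle=0$ for same-type letters), then use Ces\`aro means and Cauchy--Schwarz to get the clean increment $\Phi(\E{xa}\omega)=\Phi(\omega)+1$ for a fresh place $\tilde a$, and then induct. Steps 2 and 3 are correct \emph{given} step 1, and they would pleasantly replace the paper's good/bad-summand counting and its use of Lemma~\ref{lem:reorder}. But step 1 has a genuine gap, and it sits exactly where the essential input of the proof must enter. The proof of Lemma~\ref{lem:cancellation} establishes the constant-second-difference identity only at arguments $\geq 1$: equation~\ref{eqn:quadrupel-equation} is instantiated with $\omega'_1=\omega''_1X^{(k)}$, $k\geq 0$, and always carries the block $X_1X_2$, so it relates $f(k+1),f(k+2),f(k+3)$ and never involves $f(0)$. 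Hence $f$ is affine on $\{1,2,\dots\}$ with slope $\Phi(X^{(2)})-\Phi(X^{(1)})$, but nothing in that argument forces the affine extension to pass through $(0,0)$; the slope remains undetermined, and your claim that ``the linearity extends down to $k=0$'' is unsupported.

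Moreover, this cannot be repaired within the framework you invoke. Lemma~\ref{lem:cancellation} and its proof use only equation~\ref{eqn:disjoint-exact} (via equations~\ref{eqn:disjoint-left} and~\ref{eqn:disjoint-right}), the normalization, $\Lambda$-invariance, and the triangle inequality. If $\Phi(X^{(2)})=2$ followed from these alone, then your steps 2--3 and the concluding square argument would prove the whole theorem without ever using equation~\ref{eqn:adjacent-big} --- contradicting the remark after Lemma~\ref{lem:limit-form}, which exhibits nontrivial forms satisfying equation~\ref{eqn:disjoint-exact} for which the conclusion must fail. In the paper, equation~\ref{eqn:adjacent-big} enters through equations~\ref{eqn:expand-left} and~\ref{eqn:expand-right}; combined with Lemmas~\ref{lem:cancellation} and~\ref{lem:reorder} these yield $\Phi(\omega'X)-\Phi(\omega')=\Phi(X'X)-\Phi(X)$ for \emph{all} star-shaped $\omega'X$ simultaneously, and only then does the specialization $\omega'=\one$ produce $\Phi(X^{(2)})=2$. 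So the normalization you want as your starting point is in fact an output of the harmonicity constraint, not of the cancellation machinery; you would need to reinsert equations~\ref{eqn:expand-left}/\ref{eqn:expand-right} (or some other use of equation~\ref{eqn:adjacent-big}) before your Ces\`aro step can get off the ground.
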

\begin{proof}
Let $\omega=\omega'X$ be any star-shaped word around $\tilde{x}$, where $X$ is a single letter. By applying equations~\ref{eqn:expand-left} and~\ref{eqn:expand-right} iteratively, we obtain
\begin{align*}
\Phi(\omega')&+2k=\frac{1}{4^k\cdot 4^k}\cdot\sum\Phi\big(
\omega_1\omega'\omega_2\big)\quad\text{and}\\
\Phi(\omega'X)&+2k=\frac{1}{4^k\cdot 4^k}\cdot\sum\Phi\big(
\omega_1\omega'X\omega_2\big),
\end{align*}
where the sums range over all $k$-long words $\omega_1,\omega_2$ over the letters $\E{xa},\E{\inv{x}a},\E{ax},\E{a\inv{x}}$, and $\tilde{a}$ becomes a new unused place for every letter.
Every word appearing in the sums is star-shaped around $\tilde{x}$, and most summands are good in the sense that both $\omega_1$ and $\omega_2$ contain all four types of letters at least twice.

For every good summand we can write $\omega_1=\omega'_1X'\omega''_1$, where $X'$ is a single letter of the same type as $X$ and where $\omega'_1$ contains all four types of letters. Then
\begin{equation*}
\Phi(\omega_1\omega'X\omega_2)-\Phi(\omega_1\omega'\omega_2)
\overset{\ref{eqn:reorder}}=
\Phi(\omega'_1\omega''_1X'X\omega'\omega_2)-\Phi(\omega'_1\omega''_1X'\omega'\omega_2)
\overset{\ref{eqn:cancellation}}=\Phi(X'X)-\Phi(X).
\end{equation*}
The proportion of bad summands decreases exponentially as $k\to\infty$, and by equation~\ref{eqn:quadratic-bound} the error that they contribute increases at most quadratically. Hence, letting $k\to\infty$ gives
$\Phi(\omega'X)-\Phi(\omega')=\Phi(X'X)-\Phi(X)$.
Applying this to $\omega'=\one$ gives $\Phi(X'X)-\Phi(X)=1$, which, in turn, simplifies the last equation to $\Phi(\omega'X)=\Phi(\omega')+1$. The statement follows by induction on $|\omega|$.
\end{proof}

To conclude the proof of Theorem~\ref{thm:main-result}, consider the diagram in Figure~\ref{fig:diagram}, which is the analogue of the left diagram in \cite{N}*{Figure 1}. Two group elements $\gamma_1,\gamma_2$ are connected by a solid line if $\|\gamma_1-\gamma_2\|^2=1$, and by a dashed line if Lemma~\ref{lem:star-shaped-values} shows $\|\gamma_1-\gamma_2\|^2=2$. Here we use that $\E{ac}$ commutes with $\E{bc}$, the only place in this proof where we need equation~\ref{eqn:right-commutes}.
The elements $\one,\E{ac},\E{bc},\E{ac}\E{bc}$ form a planar square, which is perpendicular to the line through $\one,\E{ab}$. But then the value of $\|\E{ac}\E{bc}-\E{ab}\|^2=\Phi(\E{a\inv{c}}\E{b\inv{c}}\E{ab})=\Phi(\E{ab}\E{b\inv{c}})$ cannot be $2$, as claimed. This contradiction means that the initial assumption was false and all but finitely many of the groups $\Gamma_n$ have property~(T).

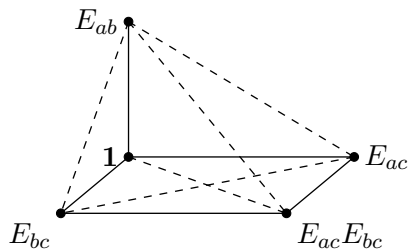
\begin{figure}[ht]\label{fig:diagram}
\begin{tikzpicture}[x=0.30cm, y=0.30cm]
\coordinate (P1) at (3.0, 2.50);
\coordinate (Pac) at (13.0, 2.50);
\coordinate (Pbc) at (0.0, 0.0);
\coordinate (Pacbc) at (10.0, 0.0);
\coordinate (Pab) at (3.0, 8.50);
\draw[line width=0.5pt] (P1) -- (Pac);
\draw[line width=0.5pt] (P1) -- (Pbc);
\draw[line width=0.5pt] (Pac) -- (Pacbc);
\draw[line width=0.5pt] (Pbc) -- (Pacbc);
\draw[line width=0.5pt] (P1) -- (Pab);
\draw[dashed, line width=0.5pt] (P1) -- (Pacbc);
\draw[dashed, line width=0.5pt] (Pac) -- (Pbc);
\draw[dashed, line width=0.5pt] (Pab) -- (Pac);
\draw[dashed, line width=0.5pt] (Pab) -- (Pbc);
\draw[dashed, line width=0.5pt] (Pab) -- (Pacbc);
\draw[fill=black] (P1)    circle (0.15em) node[left] {$\one$};
\draw[fill=black] (Pac)   circle (0.15em) node[right] {$\E{ac}$};
\draw[fill=black] (Pbc)   circle (0.15em) node[below left] {$\E{bc}$};
\draw[fill=black] (Pacbc) circle (0.15em) node[below right] {$\E{ac}\E{bc}$};
\draw[fill=black] (Pab)   circle (0.15em) node[left] {$\E{ab}$};
\end{tikzpicture}
\caption{The distances between the elements $\one,\E{ac},\E{bc},\E{ac}\E{bc},\E{ab}$}
\end{figure}

\begin{remark}
If we put a total order on $\mathcal{Z}$ such that infinitely many places appear both before and after $\tilde{x}$, then all proof steps in this section require only words with letters $\E{ab}$ where $\tilde{a}<\tilde{b}$.
This is analogous to the case of $\Gamma=\SL(3,\Z)$ (see \cite{N}), where the property~(T) proof can be carried out by looking only at upper triangular matrices in combination with the symmetry group $\Lambda$ of $(\Gamma,\mathcal{S})$.
\end{remark}

\begin{question}
Can the above property~(T) proof be carried out without using relation~\ref{eqn:right-commutes}, i.e., is it true that for all but finitely many $n$ the group with generators $\mathcal{S}_n$ and relations~\ref{eqn:inverse} to~\ref{eqn:pentagram-relation} has property~(T)?
\end{question}

\section*{Acknowledgments}

This research was supported by DFG Grant 281869850 (RTG 2229, ``Asymptotic Invariants and Limits of Groups and Spaces'').

\begin{bibdiv}
\begin{biblist}

\bib{BHV}{book}{
   title =     {Kazhdan's Property $(T)$},
   author =    {Bachir Bekka},
   author =    {Pierre de la de la Harpe},
   author =    {Alain Valette},
   publisher = {Cambridge University Press},
   isbn =      {0521887208,9780521887205},
   year =      {2008},
   series =    {New mathematical monographs 11},
   edition =   {},
   volume =    {},
}

\bib{G}{article}{
   author={Gersten, S. M.},
   title={A presentation for the special automorphism group of a free group},
   journal={J. Pure Appl. Algebra},
   volume={33},
   date={1984},
   number={3},
   pages={269--279},
}

\bib{KKN}{article}{
   author={Kaluba, Marek},
   author={Kielak, Dawid},
   author={Nowak, Piotr W.},
   title={On property~(T) for $\mathrm{Aut}(F_n)$ and $\mathrm{SL}_n(\mathbb{Z})$},
   journal={Ann. of Math. (2)},
   volume={193},
   date={2021},
   number={2},
   pages={539--562},
}

\bib{KNO}{article}{
   author={Kaluba, Marek},
   author={Nowak, Piotr W.},
   author={Ozawa, Narutaka},
   title={${\rm Aut}(\mathbb F_5)$ has property $(T)$},
   journal={Math. Ann.},
   volume={375},
   date={2019},
   number={3-4},
   pages={1169--1191},
}

\bib{LP}{article}{
   author={Lubotzky, Alexander},
   author={Pak, Igor},
   title={The product replacement algorithm and Kazhdan's property (T)},
   journal={J. Amer. Math. Soc.},
   volume={14},
   date={2001},
   number={2},
   pages={347--363},
}

\bib{NT}{article}{
   author={Netzer, Tim},
   author={Thom, Andreas},
   title={Kazhdan's property~(T) via semidefinite optimization},
   journal={Exp. Math.},
   volume={24},
   date={2015},
   number={3},
   pages={371--374},
}

\bib{N}{article}{
   author={Nitsche, Martin},
   title={Computer proofs for Property~(T), and SDP duality},
   eprint={https://arxiv.org/abs/2009.05134},
   status={preprint},
}

\bib{O}{article}{
   author={Ozawa, Narutaka},
   title={Noncommutative real algebraic geometry of Kazhdan's property $(T)$},
   journal={J. Inst. Math. Jussieu},
   volume={15},
   date={2016},
   number={1},
   pages={85--90},
}

\bib{O2}{article}{
   author={Ozawa, Narutaka},
   title={A substitute for Kazhdan's property~(T) for universal non-lattices},
   date={2022},
   eprint={https://arxiv.org/abs/1812.03456},
   status={preprint},
}

\end{biblist}
\end{bibdiv}

\end{document}